\newcommand{\es}{{\mathcal{S}}}
\newcommand{\IN}{{\mathbb N}}
\newcommand{\ID}{{\mathbb D}}
\newcommand{\IC}{{\mathbb C}}
\newcommand{\A}{{\mathcal{A}}}
\newcommand{\U}{{\mathcal{U}}}
\newtheorem{theorem}{Theorem}[section]
\newtheorem{lemma}{Lemma}[section]
\newtheorem{remark}{Remark}[section]
\numberwithin{equation}{section} \numberwithin{theorem}{section}
\keywords{analytic, class $\mathcal{U}(\lambda)$, Zalcman conjecture, generalised Zalcman conjecture, Krushkal inequality, Hankel determinant}
\begin{document}

\title[On certain properties of the class $\mathcal{U}(\lambda)$ ]{On certain properties of the class $\boldsymbol{\mathcal{U}(\lambda)}$}

\author[N. M. Alarifi]{Najla M. Alarifi}
\address{Department of Mathematics, Imam Abdulrahman Bin Faisal University, Dammam 31113, Kingdom of Saudi Arabia}
\email{najarifi@gmail.com}

\author[M. Obradovi\'c] {Milutin. Obradovi\'c}
\address{Department of Mathematics, Faculty of Civil Engineering, University of Belgrade, Bulevar Kralja Aleksandra 73,11000 Belgrade, Serbia}
\email{obrad@grf.bg.ac.rs}

\author[N. Tuneski] {Nikola. Tuneski}
\address{Department of Mathematics and Informatics, Faculty of Mechanical Engineering,
Ss. Cyril and Methodius University in Skopje, Karpo\'s II b.b., 1000 Skopje, Republic of
North Macedonia}
\email{nikola.tuneski@mf.edu.mk}

\begin{abstract}
Let ${\mathcal A}$ be the class of functions analytic  in the unit disk $\ID := \{ z\in \IC:\, |z| < 1 \}$ and  normalized such that $f(z)=z+a_2z^2+a_3z^3+\cdots$. In this paper we study the class $\mathcal{U}(\lambda)$, $0<\lambda \leq1$, consisting of functions $f$ from $\A$ satisfying
\[\left|\left(\frac{z}{f(z)}\right)^2f'(z)-1\right| < \lambda \quad (z\in\ID)\]
and give results regarding the Zalcman Conjecture, the generalised Zalcman conjecture, the Krushkal inequality and the second and third order Hankel determinant.
\end{abstract}

\subjclass[2010]{Primary: 30C45, 30C50; Secondary: 30C80}

\maketitle

\section{Introduction and Preliminaries}

Let $\A$ be the class of functions that are analytic in the open unit disk $\ID=\{z:|z|<1\}$ and normalised such that $f(0)=f'(0)-1=0$, i.e., have expansion $f(z)=z+a_2z^2+a_3z^3+\cdots$.

\medskip

Further, let
\[\U(\lambda) = \left\{ f\in\A: \left|U_f(z)-1\right| < \lambda, z\in\ID  \right\},\]
where $0<\lambda\le1$ and
\[U_f(z):=\left(\frac{z}{f(z)}\right)^2f'(z).\]
The functions from $\U(\lambda)$ are univalent, its special case when $\lambda=1$ first studied in \cite{Japonica_1996}  and more details on them can be found in \cite{obpon-1,obpon-2,TTV}).

\medskip

An intriguing fact about $\U(\lambda)$ is that in spite the class $\mathcal{S}^\ast$ of starlike functions is very large and contains most classes of univalent functions, it doesn't contain the class $\U\equiv \U(1)$, i.e., $\U$ is not in $\mathcal{S}^\ast$, nor vice versa.
Namely, the function $-\ln(1-z)$ is convex, thus starlike, but not in $\mathcal{U}$ because $\operatorname U_f(0.99)=3.621\ldots>1$, while  the function $f$ defined by $\frac{z}{f(z)}=1-\frac32 z + \frac12z^{3}=(1- z)^{2} \left(1+\frac{z}{2}\right)$ is in $\mathcal{U}$ and such that $\frac{zf'(z)}{f(z)} = -\frac{2 \left(z^2+z+1\right)}{z^2+z-2} = -\frac{1}{5}+\frac{3 i}{5}$ for $z=i$. This rear property is the main reason why the class $\mathcal{U}$ (and $\mathcal{U}(\lambda)$) attracts huge attention in the past decades.

\medskip

In this paper we study class $\mathcal{U}(\lambda)$ regarding the Zalcman Conjecture, the generalised Zalcman conjecture, the Krushkal inequality and the second and third order Hankel determinant which will be defined in corresponding sections further in the paper.

\medskip

For the study we will need the following result proven in \cite{OP-2019} as a part of the proof of Theorem 1.

 \begin{lemma}\label{lem1}
 For each function $f$ in $\mathcal{U}(\lambda),$ $0< \lambda \leq 1,$  there exists function $\omega _1,$ analytic in $\mathbb{D},$ such that $|\omega _1(z)|\leq |z| < 1,$ and $|\omega '_1(z)|\leq 1,$
 for all $z \in \mathbb{D},$ with
 \begin{equation}\label{eq1.1}
 \frac{z}{f(z)} =1-a_2z-\lambda z\omega _1(z).
\end{equation}
 Additionally, for $\omega _1(z)=c_1z+c_2z^2+\cdots,$
 \begin{equation}\label{eq1.2}
 |c_1|\leq 1,\quad |c_2|\leq \frac{1}{2}(1-|c_1|^2)\quad   and \quad   |c_3|\leq \frac{1}{3}\left[1-|c_1|^2-\frac{4|c_2|^2}{1+|c_1|}\right].
 \end{equation}
\end{lemma}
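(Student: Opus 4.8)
The plan is to exhibit $\omega_1$ explicitly and then read every assertion off from a single differential identity. Since $f\in\U(\lambda)$ is univalent with $f(0)=0$, the function $G(z):=z/f(z)$ is analytic and non-vanishing on $\ID$ with $G(z)=1-a_2z+\cdots$; hence $1-a_2z-G(z)$ is analytic and vanishes to order at least two at the origin, so
\[
\omega_1(z):=\frac{1}{\lambda z}\bigl(1-a_2z-G(z)\bigr)
\]
defines an analytic function on $\ID$ with $\omega_1(0)=0$, while \eqref{eq1.1} holds by construction. Write $\omega_1(z)=c_1z+c_2z^2+\cdots$.

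Next I would record the identity between $U_f$ and $\omega_1$. From $f(z)=z/G(z)$ one gets $f'(z)=\bigl(G(z)-zG'(z)\bigr)/G(z)^2$, hence $U_f(z)=G(z)^2f'(z)=G(z)-zG'(z)$. Substituting $G(z)=1-a_2z-\lambda z\omega_1(z)$ and simplifying, all terms cancel except one, giving
\[
U_f(z)-1=\lambda z^{2}\omega_1'(z)\qquad(z\in\ID).
\]
Because $f\in\U(\lambda)$, the left-hand side has modulus $<\lambda$ on $\ID$, so $\bigl|z^{2}\omega_1'(z)\bigr|<1$ there. The function $z^{2}\omega_1'(z)$ is analytic on $\ID$, bounded by $1$, and has a zero of order at least two at the origin (since $\omega_1'(0)=c_1$); the Schwarz lemma therefore yields $\bigl|z^{2}\omega_1'(z)\bigr|\le|z|^{2}$, that is $|\omega_1'(z)|\le1$ for all $z\in\ID$ (the bound at $0$ by continuity). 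Integrating $\omega_1'$ along the segment from $0$ to $z$ then gives $|\omega_1(z)|\le|z|<1$, which settles the two assertions about $\omega_1$.

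For the coefficient inequalities \eqref{eq1.2}, the key observation is that $h:=\omega_1'$ is analytic on $\ID$ with $|h|\le1$ and Taylor expansion $h(z)=c_1+2c_2z+3c_3z^2+\cdots$. I would then invoke the classical Schur-type estimates for the coefficients of a bounded analytic function $h(z)=\gamma_0+\gamma_1z+\gamma_2z^2+\cdots$, $|h|\le1$ on $\ID$: namely $|\gamma_0|\le1$, $|\gamma_1|\le1-|\gamma_0|^2$, and $|\gamma_2|\le1-|\gamma_0|^2-\dfrac{|\gamma_1|^2}{1+|\gamma_0|}$. These come from one step of the Schur algorithm, $h_1(z):=\dfrac{1}{z}\cdot\dfrac{h(z)-h(0)}{1-\overline{h(0)}\,h(z)}$ being again bounded by $1$ on $\ID$, followed by iteration and the triangle inequality. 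Taking $\gamma_0=c_1$, $\gamma_1=2c_2$, $\gamma_2=3c_3$, these three inequalities become precisely the bounds in \eqref{eq1.2}.

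I do not expect a genuinely hard step here: the whole argument rests on the identity $U_f-1=\lambda z^{2}\omega_1'$ together with the Schwarz lemma and standard coefficient estimates. The only points demanding mild care are checking the orders of vanishing at the origin (so that the Schwarz lemma applies with exponent $2$) and keeping track of the cancellations when simplifying $G-zG'$.
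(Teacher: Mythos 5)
Your proof is correct and is essentially the standard argument (the paper itself only cites \cite{OP-2019} for this lemma): the identity $U_f(z)-1=\lambda z^2\omega_1'(z)$ combined with the Schwarz lemma gives $|\omega_1'|\le 1$ and $|\omega_1(z)|\le|z|$, and the bounds \eqref{eq1.2} follow from the classical Schur-algorithm coefficient inequalities for the bounded function $\omega_1'$. All the computations (the cancellation in $G-zG'$, the order of vanishing at the origin, and the substitution $\gamma_0=c_1$, $\gamma_1=2c_2$, $\gamma_2=3c_3$) check out.
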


\medskip

Using \eqref{eq1.1}, we have
  \begin{equation*}
z =[1-a_2z-\lambda z\omega _1(z)]f(z),
\end{equation*}
and after equating the coefficients,
\begin{align}\label{eq1.3}
&a_3=\lambda c_1+a^2_2,\nonumber\\
&a_4=\lambda c_2+2\lambda a_2c_1+a_2^3,\\
\nonumber&a_5=\lambda c_3+2\lambda a_2c_2+\lambda ^2c_1^2+3\lambda a_2^2c_1+a_2^4,
\end{align}
that we will use later on.

\medskip

We will also use the next result (\cite{OP-2016}).

\begin{lemma}\label{lem2}
Let $f\in\U(\lambda)$ for  $0<\lambda\le1$, and be given by $f(z)=z+\sum_{n=2}^{\infty}a_{n}z^n$. Then
 \begin{equation}\label{eq-9}
  |a_2|\leq 1+\lambda.
 \end{equation}
 If $|a_2|=1+\lambda$, then $f$ must be of the form
\begin{equation}\label{ch-9-U-eq-003}
f(z) = \frac{z}{1-(1+\lambda)e^{i\phi}z+\lambda e^{2i\phi}z^2}
\end{equation}
for some $\phi\in[0,2\pi]$.
\end{lemma}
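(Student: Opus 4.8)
I would start from Lemma~\ref{lem1}. By \eqref{eq1.1} we may write $z/f(z)=1-a_2z-\lambda z\omega_1(z)=1-z\,Q(z)$ with $Q(z):=a_2+\lambda\omega_1(z)$ and $|\omega_1(z)|\le|z|$ on $\ID$; the decisive extra input is that $z/f(z)$ is analytic and zero-free on $\ID$, because $f\in\U(\lambda)$ is univalent. I would deduce $|a_2|\le 1+\lambda$ by a winding-number argument. If $|a_2|<2\lambda$ there is nothing to prove, since $\lambda\le 1$ gives $|a_2|<2\lambda\le 1+\lambda$. So assume $|a_2|\ge 2\lambda$ and, for contradiction, $|a_2|>1+\lambda$. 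In this range $r\mapsto r(|a_2|-\lambda r)$ is increasing on $[0,1]$ with limit $|a_2|-\lambda>1$, so we may fix $r\in(0,1)$ with $r(|a_2|-\lambda r)>1$. On $\{|z|\le r\}$ we have $|Q(z)|\ge|a_2|-\lambda|\omega_1(z)|\ge|a_2|-\lambda r>0$, hence $Q$ is zero-free there and the closed curve $z\mapsto z\,Q(z)$, $|z|=r$, has winding number $1$ about the origin (the factor $z$ contributes $1$, the zero-free analytic factor $Q$ contributes $0$), while its modulus $r|Q(z)|>1$ confines it to $\{|w|>1\}$. Since $\overline\ID$ is connected and misses this curve, $0$ and $1$ lie in the same component of its complement, so the curve also winds once about $1$; by the argument principle $z/f(z)=1-z\,Q(z)$ then has a zero in $\{|z|<r\}$, which is impossible. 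Hence $|a_2|\le 1+\lambda$.

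For the equality case I would first normalise by the rotation $f(z)\mapsto e^{-i\phi}f(e^{i\phi}z)$, which keeps $f$ in $\U(\lambda)$ and multiplies $a_2$ by $e^{i\phi}$; so assume $a_2=1+\lambda$, whence $\RE Q(z)\ge(1+\lambda)-\lambda|z|>1$ on $\ID$. The heart of the matter is to show that $\omega_1(z)/z$ is a unimodular constant. Granting this, $z/f(z)=1-(1+\lambda)z+\lambda\varepsilon z^{2}$ with $|\varepsilon|=1$; its two zeros are the reciprocals of the roots $\zeta_1,\zeta_2$ of $\zeta^{2}-(1+\lambda)\zeta+\lambda\varepsilon$, and zero-freeness on $\ID$ forces $|\zeta_1|,|\zeta_2|\le 1$. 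As $|\zeta_1\zeta_2|=\lambda$ we get $|\zeta_j|\in[\lambda,1]$, and the chain $1+\lambda=|\zeta_1+\zeta_2|\le|\zeta_1|+|\zeta_2|\le 1+\lambda$ (the last inequality because $t+\lambda/t\le 1+\lambda$ for $t\in[\lambda,1]$) is a chain of equalities; therefore $\{|\zeta_1|,|\zeta_2|\}=\{1,\lambda\}$ and $\zeta_1,\zeta_2$ are positive multiples of one another, so---their sum being $1+\lambda>0$---$\zeta_1=1$, $\zeta_2=\lambda$, $\varepsilon=\zeta_1\zeta_2/\lambda=1$. Thus $z/f(z)=(1-z)(1-\lambda z)$, and undoing the rotation gives $f(z)=z/\bigl(1-(1+\lambda)e^{i\phi}z+\lambda e^{2i\phi}z^{2}\bigr)$.

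Finally one must establish the key claim. If $\omega_1(z)/z$ is not a unimodular constant, then by the maximum principle $|\omega_1(z)/z|<1$ throughout $\ID$, and with $M(r):=\max_{|z|=r}|\omega_1(z)/z|<1$ (non-decreasing in $r$, with $M(1^-)\le 1$) the estimate above improves to $|z/f(z)-1|=r|Q(z)|\ge r\bigl(1+\lambda-\lambda M(r)\bigr)$ on $|z|=r$. If $M(1^-)<1$ this is $>1$ for $r$ near $1$, and the winding argument again produces a zero of $z/f(z)$, a contradiction. The residual case $M(1^-)=1$ is the main obstacle: here I would bring in the second estimate $|\omega_1'(z)|\le 1$ of Lemma~\ref{lem1} together with a boundary argument of Julia--Carath\'eodory / boundary-Schwarz type, showing that $|\omega_1(z)/z|\to 1$ radially at a boundary point forces $\omega_1$ to be linear, which contradicts non-constancy. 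Apart from this boundary analysis, all steps are routine.
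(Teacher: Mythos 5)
The paper itself does not prove this lemma --- it is quoted from \cite{OP-2016} --- so there is no in-paper argument to compare against; I can only assess your proof on its own terms. Your proof of the inequality $|a_2|\le 1+\lambda$ is correct and complete: the reduction to $|a_2|\ge 2\lambda$, the lower bound $r|Q(z)|>1$ on $|z|=r$, and the winding-number (Rouch\'e-type) comparison of $1-zQ(z)$ with $-zQ(z)$ against the zero-freeness of $z/f(z)$ all hold up. Likewise, the derivation of the extremal form from the claim $\omega_1(z)=\varepsilon z$ (the analysis of $\zeta_1,\zeta_2$ via $|\zeta_1\zeta_2|=\lambda$, $|\zeta_j|\le 1$ and the equality chain) is sound.

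The genuine gap is in the equality case, precisely at the step you yourself flag as ``the main obstacle,'' and the tool you propose does not close it. First, $M(1^-)=1$ only produces a sequence $z_n$ with $|z_n|\to 1$ and $|\omega_1(z_n)/z_n|\to 1$, not radial convergence to modulus $1$ at a single boundary point. Second, and more seriously, a Julia--Carath\'eodory / boundary-Schwarz argument gives no rigidity of the kind you want: a Schwarz function with $|\omega_1(z)/z|\to 1$ at a boundary point need not be linear (take $\omega_1(z)=z(z+a)/(1+az)$, $0<a<1$), so the statement you hope to prove is false unless the hypothesis $|\omega_1'(z)|\le 1$ is used in an essential, quantitative way --- which your sketch does not do. A correct way to finish is to write $\omega_1(z)/z=\int_0^1\omega_1'(tz)\,dt$: if $z_n\to\zeta\in\partial\ID$ with $|\omega_1(z_n)/z_n|\to 1$, then $\int_0^1|\omega_1'(tz_n)|\,dt\to 1$, and since $|\omega_1'|\le 1$, dominated convergence yields $\int_0^1|\omega_1'(t\zeta)|\,dt\ge 1$, hence $|\omega_1'|\equiv 1$ on the radius $[0,\zeta)$; the maximum principle then forces $\omega_1'$ to be a unimodular constant, i.e.\ $\omega_1(z)=\varepsilon z$. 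With that substitution the remainder of your argument goes through, but as written the proof of the equality statement is incomplete.
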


\medskip

In the same paper (\cite{OP-2016}) it was conjectured that for functions in $\U(\lambda)$, $|a_n|\le\sum_{i=0}^{n-1}\lambda^i$ holds sharply, and was claimed to be proven in the case $n=3$:
\begin{equation}\label{eq-9-2}
|a_3|\leq 1+\lambda+\lambda^2.
\end{equation}
The proof rely on another claim, that for all functions $f$ from $\U(\lambda)$,
\begin{equation}\label{subord}
 \frac{f(z)}{z}\prec \frac{1}{(1+z)(1+\lambda z)}.
 \end{equation}
Recently, in \cite{lipon}, the second claim, and consequently the first one also, was proven to be wrong by giving a counterexample. Still, the subset of $\U(\lambda)$ when the inequality \eqref{eq-9-2} and subordination \eqref{subord} hold is nonempty, as the function
\begin{equation}\label{eq1.5}
f_\lambda (z)= \frac{z}{(1-z)(1-\lambda z)}=\sum_{n=1}^{\infty}\frac{1-\lambda^n}{ 1-\lambda }z^n=z+(1+\lambda)z^2+(1+\lambda+\lambda^2)z^2+\cdots
\end{equation}
shows. Here  $\left.\frac{1-\lambda^n}{ 1-\lambda }\right|_{\lambda =1}=n$ for all $n=1,2,3,\ldots$.

\medskip

Let note that from \eqref{eq1.3} and  \eqref{eq-9-2},
   \begin{equation*}
|a_3|=|\lambda c_1+a^2_2|\leq 1+\lambda+\lambda^2,
\end{equation*}
i.e.,
   \begin{equation}\label{eq1.6}
 |\lambda c_1+a^2_2|\leq 1+\lambda+\lambda^2,
\end{equation}
 which we will use further in the proofs.

\medskip

 \section{Zalcman and generalised Zalcman conjecture for the class $\mathcal{U}(\lambda)$}

In 1960 Zalcman posed the conjecture:
\[ |a_n^2-a_{2n-1}|\le (n-1)^2 \quad\quad (n\in \IN, n\ge2), \]
proven in 2014 by Krushkal (\cite{krushkal}) for the whole class $\es$ by using complex geometry of the universal Teichm\"{u}ller space. In 1999, Ma (\cite{ma}) proposed a generalized Zalcman conjecture,
\[ |a_m a_n-a_{m+n-1}|\le (m-1)(n-1) \quad\quad (m,n\in \IN, m\ge2, n\ge2),\]
which is still an open problem, closed by Ma  for the class of starlike functions and for the class of univalent functions with real coefficients. Ravichandran and Verma in \cite{ravi} closed it for the classes of starlike and convex functions of given order and for the class of functions with bounded turning.

\medskip

  In \cite{OP-2020}, the authors treated some particulars cases of those problems and obtained sharp results. Since $\mathcal{U}(\lambda)\subseteq \mathcal{U}$ for $0< \lambda \leq 1,$
the same results are valid for the class $\mathcal{U}(\lambda),$ but those results are not sharp for the class $\mathcal{U}(\lambda).$ In this part of the paper we find better and sharp results for the functions $f$ from  $\mathcal{U}(\lambda)$ satisfuing subordination \eqref{subord}.

\begin{theorem}\label{thm1}
Let $f$ in $\mathcal{U}(\lambda)$ be of the form $f(z)=z+a_2z^2+a_3z^3+\cdots$. Then
\begin{itemize}
\item[($i$)] $|a^2_2-a_3|\leq \lambda ,$ $0< \lambda \leq 1$;
\item[($ii$)] $|a^2_3-a_5|\leq \lambda (1+\lambda)^2,$ $\frac{1}{2}\left(\frac{\sqrt{23}}{3}-1\right)\leq \lambda \leq 1$, if the third coefficient of $f$ satisfies inequality \eqref{eq-9-2}.
\end{itemize}
Both results are sharp as the function $f_\lambda$ given by \eqref{eq1.5} shows.
\end{theorem}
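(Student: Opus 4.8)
The plan is as follows. Part ($i$) is immediate from the preliminaries: the first relation in \eqref{eq1.3} is $a_3=\lambda c_1+a_2^2$, so $a_2^2-a_3=-\lambda c_1$, and $|c_1|\le1$ by Lemma \ref{lem1} gives $|a_2^2-a_3|=\lambda|c_1|\le\lambda$. For sharpness one checks $f_\lambda$ from \eqref{eq1.5}: here $z/f_\lambda(z)=(1-z)(1-\lambda z)$, which in the notation of \eqref{eq1.1} forces $\omega_1(z)=-z$, hence $c_1=-1$ and equality.

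For part ($ii$) I would first rewrite $a_3^2-a_5$ using \eqref{eq1.3}: squaring $a_3=\lambda c_1+a_2^2$ and subtracting the expression for $a_5$, the terms $a_2^4$ and $\lambda^2c_1^2$ cancel, leaving
\[
a_3^2-a_5=-\lambda\bigl(a_2^2c_1+2a_2c_2+c_3\bigr),
\]
so the claim becomes $\bigl|a_2^2c_1+2a_2c_2+c_3\bigr|\le(1+\lambda)^2$. Since replacing $f(z)$ by $e^{-i\theta}f(e^{i\theta}z)$ multiplies the left-hand side by a unimodular constant, I would normalize $c_1=|c_1|=:x\in[0,1]$. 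Then three ingredients are available: $|a_2|\le1+\lambda$ (Lemma \ref{lem2}); the hypothesis \eqref{eq-9-2}, which via $a_2^2=a_3-\lambda c_1$ yields $|a_2|^2\le1+\lambda+\lambda^2+\lambda x$ (this is \eqref{eq1.6}); and the bounds \eqref{eq1.2} for $|c_2|$ and $|c_3|$. Writing $a_2^2c_1=a_3c_1-\lambda c_1^2$ and estimating term by term, $\bigl|a_2^2c_1+2a_2c_2+c_3\bigr|$ is majorized by an explicit function of the real parameters $|a_2|$, $x$, $|c_2|$.

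The main effort goes into maximizing this majorant over the admissible region $0\le x\le1$, $0\le|c_2|\le\tfrac12(1-x^2)$, $|c_3|\le\tfrac13\bigl[1-x^2-4|c_2|^2/(1+x)\bigr]$, subject to the two bounds on $|a_2|$. I expect the maximum in $|c_2|$ to sit at the endpoint $|c_2|=\tfrac12(1-x^2)$ (the expression is concave in $|c_2|$ with vertex outside the admissible interval), and, after also optimizing in $|a_2|$, the problem should reduce to a one-variable inequality in $x$ that, on clearing denominators, carries $(1-x)$ as a factor and amounts to a quadratic in $x$ being nonnegative on $[0,1]$; the minimum of that quadratic is attained at $x=1$ and equals a quadratic in $\lambda$, whose nonnegativity is exactly the stated restriction $\lambda\ge\tfrac12(\tfrac{\sqrt{23}}{3}-1)$. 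The hard part will be that a plain triangle inequality is too lossy here — indeed it only gives the conclusion for $\lambda$ close to $1$ — so one must use that the arguments of $a_2^2c_1$, $a_2c_2$ and $c_3$ are not free: they are linked through the Schur function $\omega_1'$, whose Taylor coefficients are $c_1,2c_2,3c_3$. Exploiting this rigidity to replace some of the $+$ signs coming from the triangle inequality by $-$ signs, and then pushing the resulting sharper estimate through the calculus, is the computational heart of the argument.

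For the sharpness of $|a_3^2-a_5|\le\lambda(1+\lambda)^2$ I would evaluate on $f_\lambda$ directly: from \eqref{eq1.5}, $a_3=1+\lambda+\lambda^2$ and $a_5=1+\lambda+\lambda^2+\lambda^3+\lambda^4$, and $(1+\lambda+\lambda^2)^2-(1+\lambda+\lambda^2+\lambda^3+\lambda^4)=\lambda(1+\lambda)^2$; equivalently, $\omega_1(z)=-z$ for $f_\lambda$, so $c_2=c_3=0$ and $\bigl|a_2^2c_1+2a_2c_2+c_3\bigr|=|a_2|^2=(1+\lambda)^2$.
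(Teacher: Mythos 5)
Your proposal follows essentially the same route as the paper: part ($i$) is the identity $a_2^2-a_3=-\lambda c_1$, and for part ($ii$) the paper uses exactly your reduction $a_3^2-a_5=-\lambda(c_3+2a_2c_2+a_2^2c_1)$, the same splitting $a_2^2c_1=a_3c_1-\lambda c_1^2$ so that \eqref{eq-9-2} and \eqref{eq-9} apply, the term-by-term bounds \eqref{eq1.2}, and then maximization of the resulting majorant $g_1(x,y)$ over $0\le x\le1$, $0\le y\le\tfrac12(1-x^2)$, with the edge $y=\tfrac12(1-x^2)$ producing the cubic $\varphi_1$ whose monotonicity on $[0,1]$ is exactly the condition $\lambda^2+\lambda-\tfrac53\ge0$, i.e.\ the stated restriction on $\lambda$. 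The one point where you go astray is the closing worry that ``a plain triangle inequality is too lossy'' and that one must exploit the correlation of the arguments of $a_2^2c_1$, $a_2c_2$, $c_3$ through $\omega_1$: no such refinement is needed (nor used in the paper) --- the restriction $\lambda\ge\tfrac12\bigl(\sqrt{23/3}-1\bigr)$ is precisely the price of the plain triangle-inequality estimate, and sharpness for $f_\lambda$ confirms it is attained within that range; so the ``computational heart'' you anticipate is a phantom, and the argument you already sketched before that paragraph is complete once the two-variable maximization is carried out.
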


 \begin{proof}$ $\\
($i$) From $a_3=\lambda c_1+a^2_2,$ we have $|a^2_2-a_3|= |-\lambda  c_1| \leq \lambda$.

\medskip

\noindent
($ii$)  Using the relations  \eqref{eq1.2}, \eqref{eq-9} and  \eqref{eq1.6}, we have
\begin{align}\label{eq1.7}
&|a^2_3-a_5|=|\lambda c_3+2\lambda a_2c_2+\lambda a_2^2c_1|\nonumber\\
&\quad =\lambda |  c_3+2  a_2c_2-\lambda c_1^2 +c_1(\lambda c_1+a_2^2)|\nonumber\\
&\quad \leq \lambda (|  c_3|+2  |a_2||c_2|+ \lambda |c_1|^2 +|c_1||\lambda c_1+a_2^2|)\nonumber\\
&\quad \leq \lambda \left[\frac{1}{3}\left(1-|  c_1|^2-  \frac{4|c_2|^2}{1+|c_1|} \right)+2(1+ \lambda) |c_2|+\lambda |c_1|^2+(1+\lambda+\lambda^2)|c_1|\right]\nonumber\\
&\quad =\lambda g_1(|c_1|,|c_2|),
\end{align}
where
\[
g_1(x,y)=\frac{1}{3}\left(1-x^2-  \frac{4y^2}{1+x} \right)+2(1+ \lambda) y+\lambda x^2+(1+\lambda+\lambda^2)x,
\]
$0\leq x=|c_1|\leq 1,$ $ 0\leq y=|c_2|\leq\frac{1}{2}(1-x^2).$

\medskip

Since  $\frac{\partial g_1(x,y)}{\partial x}>0$ for all $(x,y)\in E:=\{(x,y): 0\leq x\leq 1,$  $0\leq y\leq\frac{1}{2}(1-x^2)\} ,$
then $g_1$ has no singular points in the interior of $E$ and $g_1$ attains its maximum on the boundary of  $E.$

\medskip
For $x=0,$ we have $ 0\leq y\leq \frac{1}{2}$ and
\begin{align*}
g_1(0,y)&= \frac{1}{3}(1-4y^2)+2(1+\lambda)y\nonumber\\
&=\frac{1}{3}[-4y^2+6(1+\lambda)y+1]\nonumber\\
&\leq1+\lambda.
\end{align*}

\medskip

Similarly, for  $y=0 $, we have $ 0\leq x\leq 1$ and
\begin{align*}
g_1(x,0)&= \frac{1}{3}(1-x^2)+ \lambda x^2+(1+\lambda+\lambda^2)x\nonumber\\
&=\left(\lambda-\frac{1}{3}\right)x^2+(1+\lambda+\lambda^2)x+\frac{1}{3}\nonumber\\
&\leq (1+\lambda)^2.
\end{align*}

\medskip
Finally, for $ 0\leq x\leq 1$ and $y=\frac{1}{2}(1-x^2) $, we have
   \begin{equation*}
g_1\left(x,\frac{1}{2}(1-x^2)\right)=1+\lambda +\left(\frac{4}{3}+\lambda+\lambda^2\right)x-x^2-\frac{1}{3}x^3=\varphi_1(x).
\end{equation*}

\medskip

Since $\varphi'_1(x)= \frac{4}{3}+\lambda+\lambda^2-2x -x^2\geq \lambda^2+\lambda-\frac{5}{3}\geq 0$ for $\frac{1}{2}\left(\frac{\sqrt{23}}{3}-1\right)\leq \lambda \leq 1,$ then  $\varphi_1$ is an increasing function for $ 0\leq x\leq 1,$ and $\varphi_1(x)\leq (1+\lambda)^2.$

\medskip
Using  \eqref{eq1.7} and all these facts, we have the statement of Theorem \ref{thm1}($ii$).
\end{proof}

 \begin{remark}
For $0<\lambda <\frac{1}{2} \left( \frac{\sqrt{23}}{3}-1\right)=0.8844\ldots$ we have that the function $\varphi_1$ attains its maximum for $x_0=\sqrt{\lambda^2+\lambda+\frac{7}{3}}-1$ and
$\varphi_1(x_0)=\frac{2}{3}\left(  \lambda^2+\lambda+\frac{7}{3}\right)^\frac{3}{2}-1-\lambda^2,$ and so
$|a^2_3-a_5|\leq \lambda\left( \frac{2}{3}\big(  \lambda^2+\lambda+\frac{7}{3}\big)^\frac{3}{2}-1-\lambda^2\right)$.
\end{remark}

\medskip

Next we consider the Generlized Zalcman conjecture.
\medskip

\begin{theorem}\label{thm1}
Let $f$ in $\mathcal{U}(\lambda)$ be of the form $f(z)=z+a_2z^2+a_3z^3+\cdots$. Then
\begin{itemize}
\item[($i$)] $|a_2a_3-a_4|\leq \lambda(\lambda+1) ,$ $ 0< \lambda \leq 1$;
\item[($ii$)] $|a_2a_4-a_5|\leq \lambda(1+\lambda+\lambda^2),$ ${\sqrt\frac{2}{3}}  \leq \lambda \leq 1$, if the third coefficient of $f$ satisfies inequality \eqref{eq-9-2}.
\end{itemize}
Both results are sharp as the function $f_\lambda$ given by \eqref{eq1.5} shows.
\end{theorem}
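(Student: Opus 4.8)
The plan is to follow the same scheme as in the proof of the previous theorem: express $a_2a_3-a_4$ and $a_2a_4-a_5$ through $a_2$ and the Taylor coefficients $c_1,c_2,c_3$ of $\omega_1$ by means of \eqref{eq1.3}, apply the triangle inequality together with the bounds \eqref{eq1.2}, \eqref{eq-9} and \eqref{eq1.6}, and finally maximise the resulting function of $x=|c_1|$ and $y=|c_2|$ over the triangle $E=\{(x,y):0\le x\le1,\ 0\le y\le\tfrac12(1-x^2)\}$.

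For ($i$), substituting $a_3=\lambda c_1+a_2^2$ and $a_4=\lambda c_2+2\lambda a_2c_1+a_2^3$ the cubic terms cancel and
\[
a_2a_3-a_4=-\lambda(a_2c_1+c_2),
\]
so $|a_2a_3-a_4|\le\lambda\bigl(|a_2|\,|c_1|+|c_2|\bigr)$. Using $|a_2|\le1+\lambda$, $|c_1|\le1$ and $|c_2|\le\tfrac12(1-|c_1|^2)$ we are left with maximising $x\mapsto(1+\lambda)x+\tfrac12(1-x^2)$ on $[0,1]$; its derivative $1+\lambda-x$ is positive there, so the maximum is the value $1+\lambda$ at $x=1$, giving the claim.

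For ($ii$), a similar cancellation of the $a_2^4$-terms in $a_2a_4-a_5$ (again using \eqref{eq1.3}) produces
\[
a_2a_4-a_5=-\lambda\bigl(c_3+a_2c_2+a_2^2c_1+\lambda c_1^2\bigr)=-\lambda\bigl(c_3+a_2c_2+c_1a_3\bigr),
\]
where in the last equality we regrouped $a_2^2c_1+\lambda c_1^2=c_1(a_2^2+\lambda c_1)=c_1a_3$; this is the device that lets the hypothesis \eqref{eq-9-2} be used. The triangle inequality together with $|c_3|\le\tfrac13\bigl(1-|c_1|^2-\tfrac{4|c_2|^2}{1+|c_1|}\bigr)$, $|a_2|\le1+\lambda$ and $|a_3|\le1+\lambda+\lambda^2$ then gives $|a_2a_4-a_5|\le\lambda\,g_2(|c_1|,|c_2|)$ with
\[
g_2(x,y)=\tfrac13\Bigl(1-x^2-\tfrac{4y^2}{1+x}\Bigr)+(1+\lambda)y+(1+\lambda+\lambda^2)x .
\]
As before, $\partial g_2/\partial x>0$ on $E$, so it suffices to examine the three edges of $\partial E$. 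On $x=0$ and on $y=0$ one gets bounds not exceeding $1+\lambda+\lambda^2$ (in fact $\tfrac12(1+\lambda)$ on the first edge, and exactly $1+\lambda+\lambda^2$ attained at $x=1$ on the second). The critical edge is $y=\tfrac12(1-x^2)$: there $\tfrac{4y^2}{1+x}=(1-x)^2(1+x)$, and $g_2$ becomes the cubic
\[
\varphi_2(x)=-\tfrac13x^3-\tfrac{1+\lambda}{2}x^2+\Bigl(\tfrac43+\lambda+\lambda^2\Bigr)x+\tfrac{1+\lambda}{2},
\]
with $\varphi_2(1)=1+\lambda+\lambda^2$. Its derivative $\varphi_2'(x)=-x^2-(1+\lambda)x+\tfrac43+\lambda+\lambda^2$ is a downward parabola, so on $[0,1]$ it is minimised at an endpoint, and since $\varphi_2'(0)=\tfrac43+\lambda+\lambda^2>0$ while $\varphi_2'(1)=\lambda^2-\tfrac23$, we have $\varphi_2'\ge0$ on $[0,1]$ exactly when $\lambda\ge\sqrt{2/3}$. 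Hence for $\sqrt{2/3}\le\lambda\le1$ the function $\varphi_2$ is increasing and $\varphi_2(x)\le\varphi_2(1)=1+\lambda+\lambda^2$, which finishes ($ii$).

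I expect the main obstacle to be precisely the edge $y=\tfrac12(1-x^2)$: this is where the lower bound on $\lambda$ must appear (chosen so that $\varphi_2'(1)\ge0$), and one has to check that neither of the other two edges produces a larger value. Sharpness of both estimates is then immediate from $f_\lambda$ in \eqref{eq1.5}: its coefficients $a_n=1+\lambda+\cdots+\lambda^{n-1}$ give $a_2a_3-a_4=\lambda(1+\lambda)$ and $a_2a_4-a_5=\lambda(1+\lambda+\lambda^2)$, and $a_3=1+\lambda+\lambda^2$, so \eqref{eq-9-2} holds for $f_\lambda$ with equality.
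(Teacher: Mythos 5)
Your proposal is correct and follows essentially the same route as the paper: the same cancellations from \eqref{eq1.3}, the same regrouping $a_2^2c_1+\lambda c_1^2=c_1a_3$ to invoke \eqref{eq-9-2}, the same function $g_2$ maximised over the boundary of $E$, and the same cubic $\varphi_2$ on the critical edge with $\varphi_2'(1)=\lambda^2-\tfrac23$ determining the threshold $\lambda\ge\sqrt{2/3}$. No gaps.
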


\begin{proof}$ $\\
($i$) Using \eqref{eq1.2}, \eqref{eq1.3} and \eqref{eq-9} we obtain
\begin{align*}
|a_2 a_3-a_4|&=|\lambda c_2+\lambda a_2c_1|\nonumber\\
&\leq \lambda (|  c_2|+ |a_2||c_1|)\nonumber\\
&\leq \lambda \left[\frac{1}{2}(1-|  c_1|^2)+(1+ \lambda) |c_1|\right]\nonumber\\
&\leq \lambda (1+\lambda).
\end{align*}
\medskip

\noindent($ii)$ Similarly,
\begin{align*}
|a_2 a_4-a_5|&= \lambda| c_3+ a_2c_2+a^2_2 c_1+\lambda c^2_1|\nonumber\\
&\leq \lambda(| c_3|+ |a_2||c_2|+|c_1||a^2_2+\lambda c_1|)\nonumber\\
&\leq \lambda \left[\frac{1}{3}\left(1-|  c_1|^2-  \frac{4|c_2|^2}{1+|c_1|} \right)+(1+ \lambda) |c_2|+(1+\lambda+\lambda^2)|c_1|\right]\nonumber\\
&=\lambda g_2(|c_1|,|c_2|),
\end{align*}
where
\[g_2(x,y)=\frac{1}{3}\left(1-x^2-  \frac{4y^2}{1+x} \right)+(1+ \lambda) y+(1+\lambda+\lambda^2)x,\]
$0\leq x=|c_1|\leq 1,$ $  0\leq y=|c_2|\leq\frac{1}{2}(1-x^2).$

\medskip

Since  $\frac{\partial g_2(x,y)}{\partial x}>0$ for all $(x,y)\in E,$
then the function $g_2$ attains its maximum on the boundary of  $E.$ In that sense, let consider all cases

\medskip

For $x=0,$ we have $ 0\leq y\leq \frac{1}{2}$ and
\begin{align*}
g_2(0,y)&= \frac{1}{3}(1-4y^2)+(1+\lambda)y\nonumber\\
&=\frac{1}{3}[-4y^2+3(1+\lambda)y+1]\nonumber\\
&\leq \frac{1}{2} (1+\lambda),
\end{align*}
since $\lambda\geq \sqrt{\frac{2}{3}}>\frac13.$

\medskip

For $ 0\leq x\leq 1 $ and $y=0 $ we have
\begin{align*}
g_2(x,0)&= \frac{1}{3}(1-x^2)+(1+\lambda+\lambda^2)x\nonumber\\
&= \frac{1}{3}[-x^2+3(1+\lambda+\lambda^2)x+1]\nonumber\\
&\leq 1+\lambda+\lambda^2.
\end{align*}

\medskip
Finally, for $ 0\leq x\leq 1,$ $y=\frac{1}{2}(1-x^2) $ we have
   \begin{equation*}
g_2\left(x,\frac{1}{2}(1-x^2)\right)=\frac{1}{2}(1+\lambda) +\left(\lambda^2+\lambda+\frac{4}{3}\right)x-\frac{1}{2}(1+\lambda)x^2-\frac{1}{3}x^3\equiv \varphi_2(x).
\end{equation*}
Since for $\lambda^2\geq \frac{2}{3},$
 \begin{equation*}
 \varphi'_2(x)=\lambda^2+\lambda+ \frac{4}{3}-(1+\lambda)x -x^2\geq \lambda^2 -\frac{2}{3}\geq 0,
 \end{equation*}
 we realize that
  $\varphi_2$ is an increasing function and
   \begin{equation*}
  \varphi_2(x)\leq \varphi_2(1) = 1+\lambda+ \lambda^2.
  \end{equation*}

  \medskip

  From all the previous facts, we have the statement of the theorem.
\end{proof}

\medskip

 \section{Krushkal inequality for the class $\mathcal{U}(\lambda)$}

The inequality
\[ |a_n^p-a_2^{p(n-1)}|\le 2^{p(n-1)}-n^p\]
was introduced by Krushkal who claims to have proven its sharpness for the whole class of univalent functions in \cite{krushkal}. This claim is not confirmed by the mathemarical community.

\medskip

In this section we give direct proof over the class $\U(\lambda)$ for the cases $n=4$, $p=1$ and $n=5$, $p=1$.

\medskip

\begin{theorem}\label{thm1}
Let $f$ in $\mathcal{U}(\lambda)$ be of the form $f(z)=z+a_2z^2+a_3z^3+\cdots$. Then
\begin{itemize}
\item[($i$)] $|a_4-a^3_2|\leq 2\lambda(\lambda+1) $;
\item[($ii$)]  $|a_5-a^4_2|\leq \lambda(3+5\lambda+3\lambda^2)$, if the third coefficient of $f$ satisfies inequality \eqref{eq-9-2}.
\end{itemize}
Both results are sharp as the function $f_\lambda$ given by \eqref{eq1.5} shows.
\end{theorem}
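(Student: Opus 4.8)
The plan is to follow the same route as in the proofs of Theorems~2.1 and 2.2: express $a_4-a_2^3$ and $a_5-a_2^4$ through the coefficients $c_1,c_2,c_3$ of the function $\omega_1$ furnished by Lemma~\ref{lem1} by means of the relations \eqref{eq1.3}, estimate each summand using \eqref{eq1.2}, \eqref{eq-9} and --- for part ($ii$) --- \eqref{eq1.6}, and then maximise the resulting elementary function of $x=|c_1|$ and $y=|c_2|$ over the triangle $E=\{(x,y):0\le x\le1,\ 0\le y\le\frac12(1-x^2)\}$, checking first that the partial derivative in $x$ is positive so that the maximum lies on $\partial E$.

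For ($i$), \eqref{eq1.3} gives $a_4-a_2^3=\lambda c_2+2\lambda a_2c_1$, so
\[
|a_4-a_2^3|\le\lambda\bigl(|c_2|+2|a_2|\,|c_1|\bigr)\le\lambda\Bigl[\tfrac12(1-x^2)+2(1+\lambda)x\Bigr];
\]
the bracket is a downward parabola in $x$ with vertex at $x=2(1+\lambda)>1$, hence increasing on $[0,1]$ with value $2(1+\lambda)$ at $x=1$. This gives ($i$), and since only \eqref{eq1.2} and \eqref{eq-9} enter, ($i$) in fact holds on all of $\U(\lambda)$.

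For ($ii$), \eqref{eq1.3} yields $a_5-a_2^4=\lambda\bigl(c_3+2a_2c_2+\lambda c_1^2+3a_2^2c_1\bigr)$, and the key step is the regrouping $\lambda c_1^2+3a_2^2c_1=c_1(\lambda c_1+a_2^2)+2a_2^2c_1$, which makes \eqref{eq1.6} applicable to $\lambda c_1+a_2^2$ and \eqref{eq-9} to the leftover $2a_2^2c_1$. The triangle inequality together with \eqref{eq1.2}, \eqref{eq-9}, \eqref{eq1.6} then gives $|a_5-a_2^4|\le\lambda\,g_3(x,y)$ with
\[
g_3(x,y)=\tfrac13\Bigl(1-x^2-\tfrac{4y^2}{1+x}\Bigr)+2(1+\lambda)y+(3+5\lambda+3\lambda^2)x,
\]
the coefficient of $x$ being $(1+\lambda+\lambda^2)+2(1+\lambda)^2=3+5\lambda+3\lambda^2$. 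One checks $\partial g_3/\partial x>0$ on $E$; on the edge $x=0$ a parabola in $y$ gives the bound $1+\lambda$, on $y=0$ a parabola in $x$ gives $3+5\lambda+3\lambda^2$, and on $y=\frac12(1-x^2)$, using $\tfrac{(1-x^2)^2}{1+x}=1-x-x^2+x^3$, the function becomes the cubic $\varphi_3(x)=(1+\lambda)+(\tfrac{10}{3}+5\lambda+3\lambda^2)x-(1+\lambda)x^2-\tfrac13x^3$, which is increasing on $[0,1]$ since $\varphi_3'(1)=\tfrac13+3\lambda+3\lambda^2>0$, with $\varphi_3(1)=3+5\lambda+3\lambda^2$. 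Collecting the three cases proves ($ii$). Sharpness in both parts is checked on $f_\lambda$ from \eqref{eq1.5}: from $a_n=1+\lambda+\cdots+\lambda^{n-1}$ one gets $a_4-a_2^3=-2\lambda(1+\lambda)$ and $a_5-a_2^4=-\lambda(3+5\lambda+3\lambda^2)$.

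The single non-routine point is the regrouping in ($ii$): the monomial $3a_2^2c_1$ admits no sharp estimate by itself, and only after pairing it with $\lambda c_1^2$ does \eqref{eq1.6} apply; everything after that is the standard boundary analysis, and --- unlike Theorem~2.1($ii$) and Theorem~2.2($ii$) --- no extra lower restriction on $\lambda$ is needed here, because $\varphi_3'(1)>0$ for every $\lambda\in(0,1]$.
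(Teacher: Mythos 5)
Your proof is correct and follows essentially the same route as the paper's: the same coefficient identities \eqref{eq1.3}, the same use of \eqref{eq1.2}, \eqref{eq-9}, \eqref{eq1.6}, and the same maximization over the boundary of $E$. The only deviation is in ($ii$), where you split $\lambda c_1^2+3a_2^2c_1$ as $c_1(\lambda c_1+a_2^2)+2a_2^2c_1$ instead of the paper's $-2\lambda c_1^2+3c_1(\lambda c_1+a_2^2)$, producing a slightly larger majorant (the term $(3+5\lambda+3\lambda^2)x$ in place of $2\lambda x^2+3(1+\lambda+\lambda^2)x$) that nonetheless attains the same maximum $3+5\lambda+3\lambda^2$ at $x=1$, $y=0$.
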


\begin{proof}$ $\\
($i$) Similarly as in the proofs of two previous theorem, we have
\begin{align*}
|a_4-a^3_2|&= \lambda| c_2+2 a_2c_1|\nonumber\\
&\leq \lambda (|  c_2|+ 2|a_2||c_1|)\nonumber\\
&\leq \lambda \left[\frac{1}{2}(1-|  c_1|^2)+2(1+ \lambda) |c_1|\right]\nonumber\\
&\leq 2\lambda (1+\lambda).
\end{align*}

\medskip\noindent
($ii$) We can easy verify that
\begin{align*}
&|a_5-a^4_2|= \lambda| c_3+2 a_2c_2+\lambda c^2_1 +3a^2_2 c_1|\nonumber\\
&\quad = \lambda| c_3+2 a_2c_2-2\lambda c^2_1 +3c_1(\lambda c_1+a^2_2 )|\nonumber\\
&\quad \leq \lambda(| c_3|+ 2|a_2||c_2|+2 \lambda|c_1|^2+3|c_1||\lambda c_1+a^2_2|)\nonumber\\
&\quad \leq \lambda \bigg[\frac{1}{3}\left(1-|  c_1|^2- \frac{4|c_2|^2}{1+|c_1|} \right)+2(1+ \lambda) |c_2|+2\lambda |c_1|^2+3(1+\lambda+\lambda^2)|c_1| \bigg]\nonumber\\
&\quad =\lambda g_3(|c_1|,|c_2|),
\end{align*}
where
\[g_3(x,y)=\frac{1}{3}\left(1-x^2-  \frac{4y^2}{1+x} \right)+2(1+ \lambda) y+2\lambda x^2+3(1+\lambda+\lambda^2)x,\]
$0\leq x=|c_1|\leq 1,$ $0\leq y=|c_2|\leq\frac{1}{2}(1-x^2).$

\medskip

Since  $\frac{\partial g_3(x,y)}{\partial x}>0,$ $(x,y)\in E,$
then we have that $g_3$ attains its maximum on the boundary of  $E.$

\medskip

For $x=0,$  $ 0\leq y\leq \frac{1}{2}$ we have
\begin{equation*}
g_3(0,y)= \frac{1}{3}(1-4y^2)+2(1+\lambda)y\leq (1+\lambda).
\end{equation*}

\medskip
For $ 0\leq x\leq 1,$ $y=0 $ we have
\begin{align*}
g_3(x,0)&= \left(2 \lambda-\frac{1}{3}\right)x^2+3(1+\lambda+\lambda^2)x+\frac{1}{3}\nonumber\\
& \leq 2 \lambda +3( 1+\lambda+\lambda^2)\nonumber\\
&= 3+5\lambda+3\lambda^2.
\end{align*}

\medskip
Finally, for $ 0\leq x\leq 1,$ $y=\frac{1}{2}(1-x^2) $,
   \begin{equation*}
g_3\left(x,\frac{1}{2}(1-x^2)\right)= 1+\lambda +\left(\frac{1}{3}+3 (1+\lambda+\lambda^2)\right)x-(1-\lambda) x^2-\frac{1}{3}x^3\equiv \varphi_3(x).
\end{equation*}
Since
 \begin{align*}
 \varphi'_3(x)&=\frac{1}{3}+3 (1+\lambda+\lambda^2)-2(1-\lambda) x -x^2\nonumber\\
 & \geq \frac{1}{3}+5\lambda+3\lambda^2> 0,
 \end{align*}
  $\varphi_3$ is an increasing function and
   \begin{equation*}
  \varphi_3(x)\leq \varphi_3(1) = 3+5\lambda+3\lambda^2.
  \end{equation*}

\medskip
  All those facts imply the conclusion of theorem.
\end{proof}

\medskip
\section{Hankel determinant of second and third order}

Let $f\in \A$. Then the $qth$ Hankel determinant of $f$ is defined for $q\geq 1$, and
$n\geq 1$ by
\[
        H_{q}(n) = \left |
        \begin{array}{cccc}
        a_{n} & a_{n+1}& \ldots& a_{n+q-1}\\
        a_{n+1}&a_{n+2}& \ldots& a_{n+q}\\
        \vdots&\vdots&~&\vdots \\
        a_{n+q-1}& a_{n+q}&\ldots&a_{n+2q-2}\\
        \end{array}
        \right |.
\]
Thus, the second and the third Hankel determinants are, respectively,
\[
\begin{split}
H_{2}(2)&= a_2a_4-a_{3}^2,\\
H_{3}(1)&= a_{3}(a_2a_4-a_{3}^2)-a_{4}(a_4-a_{2}a_{3})+a_{5}(a_3-a_{2}^2).
\end{split}
\]

\medskip

In \cite{OP-2019} the authors showed that for the class $\U$, the following estimates are sharp:
$$|H_{2}(2)|\leq1\quad \mbox{and}\quad  |H_{3}(1)|\leq \frac{1}{4}.$$
Here we generalize this result for the classes $\U(\lambda)$.

\medskip

\begin{theorem}\label{thm2.1}
Let $f \in\mathcal{U}(\lambda),$ $0< \lambda \leq 1$, and $f(z)=z+a_2z^2+a_3z^3+\cdots$.  Then
\[ |H_2(2)|\le \frac{\lambda(\lambda+1)}{2} \quad \mbox{and}\quad |H_3(1)|\le \frac{\lambda^2}{4} . \]
The second result is sharp due to the function $f(z)=\frac{z}{1-\lambda/2 z^{3}}=z+\frac{\lambda z^4}{2}+\frac{\lambda^2 z^7}{4}+\cdots$.
\end{theorem}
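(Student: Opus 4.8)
The plan is to reduce both estimates to the coefficient parametrization of Lemma~\ref{lem1}, using the expansions \eqref{eq1.3} and the coefficient bounds \eqref{eq1.2} together with $|a_2|\le 1+\lambda$ from \eqref{eq-9}. First I would treat $|H_2(2)| = |a_2a_4 - a_3^2|$. Substituting $a_3 = \lambda c_1 + a_2^2$ and $a_4 = \lambda c_2 + 2\lambda a_2 c_1 + a_2^3$ into $a_2 a_4 - a_3^2$, the pure $a_2$-powers should cancel (as happened in the Zalcman and Krushkal computations above), leaving something like $\lambda\bigl(a_2 c_2 + \text{lower-order terms in } c_1, a_2\bigr) - \lambda^2 c_1^2$. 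I would then apply the triangle inequality, insert $|a_2|\le 1+\lambda$, $|c_1|\le 1$, and $|c_2|\le \tfrac12(1-|c_1|^2)$, and reduce to maximizing a one- or two-variable polynomial over $0\le x=|c_1|\le 1$ — exactly the pattern of Theorems in Sections 2 and 3. The target value $\tfrac{\lambda(\lambda+1)}{2}$ suggests the maximum occurs at $c_1 = 0$, $c_2$ extremal, which would need to be verified by checking monotonicity in $x$.

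For $|H_3(1)|$, the expression $a_3(a_2a_4 - a_3^2) - a_4(a_4 - a_2a_3) + a_5(a_3 - a_2^2)$ is more delicate. The last factor simplifies immediately: $a_3 - a_2^2 = \lambda c_1$ by \eqref{eq1.3}. For the first two terms I would substitute all of \eqref{eq1.3}; here the hope is that after cancellation the whole determinant is divisible by $\lambda^2$, consistent with the claimed bound $\tfrac{\lambda^2}{4}$ and with the extremal function $z/(1-\tfrac{\lambda}{2}z^3)$, which has $a_2=a_3=0$, $a_4=\tfrac{\lambda}{2}$, $a_5=0$, $a_7=\tfrac{\lambda^2}{4}$, giving $H_3(1) = -a_4^2 = -\tfrac{\lambda^2}{4}$. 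That extremal data ($c_1 = 0$, $a_2 = 0$) strongly indicates that the estimate should be proven by bounding $|H_3(1)|$ by a function of $|c_1|, |c_2|, |c_3|, |a_2|$ and showing the maximum is attained at $c_1 = a_2 = 0$ with $|c_2| \le \tfrac12$. At that point $a_4 = \lambda c_2$ and the dominant contribution is $-a_4^2 = -\lambda^2 c_2^2$, whence $|H_3(1)| \le \lambda^2 |c_2|^2 + (\text{corrections}) \le \tfrac{\lambda^2}{4}$.

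The main obstacle will be controlling the cross terms in $|H_3(1)|$ when $c_1$ and $a_2$ are \emph{not} zero: the naive triangle-inequality bound will involve $|c_3|$ (estimated via the third inequality in \eqref{eq1.2}, which itself couples $|c_1|$ and $|c_2|$), products like $|a_2||c_2|$, $|a_2|^2|c_1|$, $\lambda|c_1|^2$, and the term $|a_3| = |\lambda c_1 + a_2^2|$ for which one can use \eqref{eq1.6} — and it is not obvious a priori that the resulting multivariable polynomial stays below $\tfrac14$ on the full parameter region. I would expect to need the same device used repeatedly above: show the relevant partial derivative with respect to $x = |c_1|$ has a fixed sign (likely negative here, pushing the optimum to $x=0$), then on the face $x=0$ reduce to $-4y^2 + (\text{something})y + (\text{something})$ in $y = |c_2| \in [0,\tfrac12]$ and finish with a one-variable estimate. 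If the crude bound turns out to be too lossy, a sharper route is to keep $H_3(1)$ in the form $a_3 H_2(2) - a_4(a_4 - a_2a_3) + \lambda c_1 a_5$ and bound the three pieces using the already-established estimates for $H_2(2)$ and for $|a_4 - a_2 a_3| = \lambda|c_2 + a_2 c_1|$, which is cleaner; then only $|\lambda c_1 a_5|$ needs fresh work. I would attempt the clean decomposition first and fall back to the brute-force multivariable optimization only if the constants do not close.
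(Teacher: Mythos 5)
Your plan for $|H_2(2)|$ is essentially the paper's proof: substituting \eqref{eq1.3} gives exactly $a_2a_4-a_3^2=\lambda(a_2c_2-\lambda c_1^2)$ (no leftover lower-order terms), and then $|H_2(2)|\le\lambda\big[(1+\lambda)\tfrac{1-|c_1|^2}{2}+\lambda|c_1|^2\big]=\lambda\big[\tfrac{\lambda+1}{2}+\tfrac{\lambda-1}{2}|c_1|^2\big]\le\tfrac{\lambda(\lambda+1)}{2}$ since $\lambda\le1$. That part is fine.

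For $|H_3(1)|$ there is a genuine gap: you anticipate an obstacle (cross terms in $a_2$ and $c_1$) that in fact does not exist, and the routes you propose for dealing with it would not close. The key point you are missing is that after substituting all of \eqref{eq1.3} into $a_3(a_2a_4-a_3^2)-a_4(a_4-a_2a_3)+a_5(a_3-a_2^2)$, \emph{every} term involving $a_2$ cancels identically, leaving the clean identity $H_3(1)=\lambda^2(c_1c_3-c_2^2)$. From there the paper needs only the two-variable estimate
\begin{equation*}
|c_1||c_3|+|c_2|^2\le \frac{|c_1|}{3}\Big(1-|c_1|^2-\frac{4|c_2|^2}{1+|c_1|}\Big)+|c_2|^2
=\frac{1}{3}\Big(|c_1|-|c_1|^3+\frac{3-|c_1|}{1+|c_1|}|c_2|^2\Big)\le\frac{3-2|c_1|^2-|c_1|^4}{12}\le\frac14,
\end{equation*}
using the third inequality of \eqref{eq1.2} and $|c_2|\le\tfrac12(1-|c_1|^2)$; no monotonicity-in-$x$ argument and no use of \eqref{eq1.6} or $|a_2|\le1+\lambda$ is needed. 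Your first-listed route (full substitution) would reveal this if carried out, but your stated expectation that the bound will involve $|a_2|$, $|a_2|^2|c_1|$ and $|\lambda c_1+a_2^2|$ shows you plan to apply the triangle inequality before the cancellation, which is too lossy. Your fallback decomposition $a_3H_2(2)-a_4(a_4-a_2a_3)+\lambda c_1a_5$ with the three pieces bounded separately definitely fails: already $|a_3||H_2(2)|$ can be as large as $(1+\lambda+\lambda^2)\cdot\tfrac{\lambda(1+\lambda)}{2}$, which far exceeds $\tfrac{\lambda^2}{4}$. You must exploit the exact algebraic cancellation, not estimate the three summands independently.
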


\begin{proof}
 Using the relation \eqref{eq1.3}, after some calculations  we have:
\begin{align*}
|H_2(2)| &= \left|a_2 a_4-a_3^2 \right| =\lambda \left|a_2  c_2-\lambda c_1^2\right|\\
 &\leq \lambda  \left(|a_2 | |c_2| +\lambda |c_1|^2\right) \\
 &\leq \lambda  \left[(\lambda+1)\left(\frac{1-|c_1|^2}{2} \right) +\lambda  |c_1| ^2\right] \\
 &\leq \lambda  \left[ \frac{\lambda+1}{2}  + \left(\frac{\lambda-1}{2} \right) |c_1| ^2\right]\\
  &\leq \frac{\lambda  (\lambda+1)}{2},
\end{align*}
when $0\leq |c_1| ^2 \leq 1 $, because $\lambda -1\leq0$.

\medskip

In a similar way, using \eqref{eq1.2} and \eqref{eq1.3}, after some calculations, for the third order Hankel determinant we have
 \begin{align*}
 \left|H_3(1) \right|
  &=\lambda^2 |c_1 c_3-c_2^2|\leq \lambda^2  (|c_1| |c_3|+|c_2|^2)\\
 &  \leq \lambda^2 \left[ \frac{|c_1|}{3}\left(1-|c_1|^2-\frac{4|c_2|^2}{1+|c_1|}\right)+|c_2|^2\right]\\
 &= \frac{\lambda^2}{3} \left[  |c_1| -|c_1|^3+\frac{3- |c_1|}{1+|c_1|} \cdot|c_2|^2\right]\\
 &\le \frac{\lambda^2}{3} \left[  |c_1| -|c_1|^3+\frac{3- |c_1|}{1+|c_1|} \cdot\frac{(1-|c_1|^2)^2}{4}\right]\\
  %
%
  &= \frac{\lambda^2}{12} (3-2|c_1|^2-|c_1|^4)\leq\frac{3 \lambda^2}{12}=\frac{  \lambda^2}{4}.
 \end{align*}
Equality is attained for the function $f(z)=\frac{z}{1-(\lambda/2) z^{3}}=z+\frac{\lambda z^4}{2}+\frac{\lambda^2 z^7}{4}+\cdots.$
\end{proof}

\medskip

\end{document}